\spnewtheorem{algorithm}{Algorithm}{\bfseries}{}
\DeclareMathOperator*{\argmin}{arg\,min}
\DeclareMathOperator*{\prox}{prox}
\DeclareMathOperator*{\TV}{TV}
\DeclareMathOperator*{\dist}{dist}
\newcommand{\R}{\mathbb R}
\newcommand{\Fun}[1]{\mathcal{#1}}
\newcommand{\FF}{\Fun F}
\newcommand{\FH}{\Fun H}
\newcommand{\FI}{\Fun I}
\newcommand{\FJ}{\Fun J}
\newcommand{\FK}{\Fun K}
\newcommand{\FQ}{\Fun Q}
\newcommand{\FG}{\Fun G}
\newcommand{\FL}{\Fun L}
\newcommand{\FV}{\Fun V}
\newcommand{\Mat}[1]{\bm{#1}}
\newcommand{\MA}{\Mat A}
\newcommand{\ME}{\Mat E}
\newcommand{\MI}{\Mat I}
\newcommand{\MSigma}{\Mat \Sigma}
\newcommand{\MQ}{\Mat Q}
\newcommand{\MU}{\Mat U}
\newcommand{\MV}{\Mat V}
\newcommand{\MY}{\Mat Y}
\newcommand{\MZ}{\Mat Z}
\newcommand{\Mzero}{\Mat 0}
\newcommand{\Vek}[1]{\bm{#1}}
\newcommand{\Ve}{\Vek e}
\newcommand{\Vf}{\Vek f}
\newcommand{\Vx}{\Vek x}
\newcommand{\Vy}{\Vek y}
\newcommand{\Vz}{\Vek z}
\newcommand{\Vu}{\Vek u}
\newcommand{\Vv}{\Vek v}
\newcommand{\Vxi}{\Vek \xi}
\newcommand{\Vell}{\Vek \ell}
\newcommand{\Vones}{\Vek 1}
\newcommand{\Msym}{\mathcal S}
\newcommand{\hyper}{\mathbb H}
\newcommand{\Pdisc}{\mathbb D}
\newcommand{\Phalf}{\mathbb P}
\newcommand{\Aff}{\mathbb A}
\newcommand{\proj}{{\mathrm{proj}}}
\newcommand{\arcosh}{{\mathrm{arcosh}}}
\newcommand{\diag}{{\mathrm{diag}}}
\newcommand{\tF}{\mathrm{F}}
\newcommand{\rk}{\mathrm{rank}}
\newcommand{\Mbiform}{\eta}
\newcommand{\Mdist}{\dist_{\hyper_d}}
\begin{document}
\title{Denoising Hyperbolic-Valued Data by
Relaxed Regularizations%
}

\author{R. Beinert \and J. Bresch}
\authorrunning{R. Beinert and J. Bresch}
\institute{Technische Universität Berlin, 
Institute of Mathematics, Stra{\ss}e des 17. Juni 136, 10623 Berlin, Germany
\email{\{beinert, bresch\}@math.tu-berlin.de}}
\maketitle             
\begin{abstract}
    We introduce a novel relaxation strategy
    for denoising hy\-per\-bol\-ic-valued data.
    The main challenge is here
    the non-convexity of the hyperbolic sheet.
    Instead of considering the denoising problem directly 
    on the hyperbolic space,
    we exploit the Euclidean embedding
    and encode the hyperbolic sheet
    using a novel matrix representation.
    For denoising,
    we employ the Euclidean Tikhonov 
    and total variation (TV) model,
    where we incorporate our matrix representation.
    The major contribution is then
    a convex relaxation of the variational ansätze
    allowing the utilization 
    of well-established convex optimization procedures 
    like the alternating directions method of multipliers (ADMM).
    The resulting denoisers 
    are applied to a real-world Gaussian image processing task,
    where we simultaneously restore the pixelwise
    mean and standard deviation 
    of a retina scan series.
\keywords{
    Hyperbolic-valued data \and
    Signal and image processing \and
    TV denoising \and
    Tikhonov denoising \and  
    Convex relaxation methods.}
\end{abstract}

% -------------------------------------------------------------------------------------------
\section{Introduction}
% -------------------------------------------------------------------------------------------

The geometry of hyperbolic spaces finds
applications across diverse research fields
spanning from theoretical studies in mathematics and physics
to practical implementations in machine learning, 
network analysis, 
and computer vision.
One of the most famous example is the specific relativity theory,
where hyperbolic spaces and their geometry are often used to model the spacetime
\cite{Wa84}.
Moreover,
hyperbolic spaces are particularly well-suited
for capturing hierarchical structures, 
such as those found in social and biological networks
as well as in the topology of the internet 
\cite{KPKVB10,BPK10}.
In order to analyse hierarchical or tree-like data,
the hyperbolic geometry has been incorporated
into machine learning architectures
leading to so-called hyperbolic neural networks
\cite{LiuNicKie2019,GanBecHof2018},
which,
for instance,
play a major role in hierarchical clustering
\cite{NicKie2017,LinBai2023}.
From the view point of image analysis,
hyperbolic spaces become interesting
due to their utilization in
image segmentation \cite{ASAV+22},
image denoising \cite{BerPerSte16},
and morphological image processing \cite{AV14}.
Hyperbolic-valued images, 
for instance, 
occur in probabilistic image analysis \cite{AV14,BerPerSte16},
where each pixel is described by a Gaussian distribution,
and as structure tensor in edge and texture processing \cite{CF09}.

Mathematically,
the \emph{hyperbolic space} $\hyper_d$ is the 
unique, 
simply connected,
$d$-dimensional
Riemannian manifold 
with constant curvature of $-1$,
see \cite{Killing1891,Hopf1926}.
The hyperbolic spaces possess several different embeddings
\cite{L97}.
Throughout this paper,
we rely on the Euclidean one:
\begin{equation*}
    \hyper_d \coloneqq \{\Vx \in \R^{d+1} \colon \Mbiform(\Vx, \Vx) = -1 \quad\text{and}\quad x_{d+1} > 0\}
\end{equation*}
where $\Mbiform \colon \hyper_d \times \hyper_d \to \R, (\Vx, \Vy) \mapsto \sum_{i = 1}^d x_iy_i - x_{d+1}y_{d+1}$
is the \emph{Minkowski bilinearfrom} 
defining the \emph{hyperbolic distance}
$\Mdist(\Vx, \Vy) \coloneqq \arcosh(-\Mbiform(\Vx, \Vy)).$
Note that 
the Euclidean embedding of the hyperbolic space $\hyper_d$,
which is sometimes called the \emph{hyperbolic sheet},
is contained in the $(d+1)$-dimensional, affine halfspace 
$\Aff_{d+1} \coloneqq \R^d \times \R_{\geq 1}$,
i.e.\ $\hyper_d \subset \Aff_{d+1}$.

In this paper,
we approach the problem of hyperbolic image denoising.
Here,
the main issue comes form the non-convexity of the hyperbolic sheet.
This is why hyperbolic image denoising becomes highly demanding
from a numerical point of view.
For instance,
hyperbolic images can be denoised
using total variation (TV) models based on the hyperbolic distance
and applying a manifold version of the Douglas--Rachford algorithm
\cite{BerPerSte16}.
The major contribution of this paper is that,
instead of minimizing a variational denoising model
directly on the hyperbolic sheet $\hyper_d$,
we exploit the Euclidean embedding
and propose a novel convex relaxation
that allows the application of well-established convex optimization methods on $\R^{d+1}$.
The paper is organized as follows:
In §~\ref{sec:DenoisingHd},
we introduce (Euclidean) TV- and Tikhonov-like regularization models
and convex relaxations
that encode the hyperbolic sheet 
using positive semi-definite matrices
in the spirit of \cite{BeBrSt24,BeBr24,condat_1D2D}.
In §~\ref{sec:algo},
we use the alternating directions method of multipliers (ADMM) \cite{GM75}
to derive an actual denoising algorithm.
Finally,
in §~\ref{sec:num-exp},
we apply the proposed method to 
denoising problems from Gaussian image processing.

%---------------------------------------------------------------------------------------------
\section{Denoising of Hyperbolic-Valued Data} \label{sec:DenoisingHd}
%---------------------------------------------------------------------------------------------

The aim of this paper is to denoise hyperbolic-valued data
on a connected, undirected graph $G = (V, E)$,
where $V \coloneqq \{1, \dots, N\}$ denotes the set of $N$ vertices
and $E \subseteq \{(n,m) \in V \times V: n < m \}$
the set of $M \coloneqq |E|$ edges.
We are now interested in the restoration of hyperbolic-valued data 
$\Vx \coloneqq (\Vx_n)_{n \in V} \in \hyper_d^{N}$
form noisy measurements $\Vy \coloneqq (\Vy_n)_{n \in V}$ 
in $\hyper_d^N$ or $(\R^{d+1})^N$.
For applications in signal and image processing,
$G$ corresponds to the line or grid graph.

%---------------------------------------------------------------------------------------------
\subsection{Tikhonov-like Regularization}   \label{sec:Tik_denoising_model}
%---------------------------------------------------------------------------------------------

For denoising relatively smooth hyperbolic data,
we consider the variational Tikhonov-like denoising model:
\begin{equation}
\label{eq:orig_tik_Hd}
    \argmin_{\Vx \in \hyper_d^N}
    \quad
    \frac{1}{2}\sum_{n \in V} \lVert \Vx_n - \Vy_n\rVert_2^2
    + 
    \frac{\lambda}{2}\sum_{(n,m) \in E} 
    \lVert\Vx_n - \Vx_m\rVert_2^2,
\end{equation}
where $\lVert \cdot \rVert_2$ denotes the Euclidean norm,
and $\lambda > 0$ the regularization parameter.
Note that 
the model \eqref{eq:orig_tik_Hd} is non-convex
due to the minimization of the hyperbolic sheet.
In order to convexify \eqref{eq:orig_tik_Hd},
we adapt the strategies in \cite{BeBrSt24, condat_1D2D},
where the Tikhonov denoising for the hypersphere and SO(3) is considered.
For this,
we introduce the auxiliary variables $\Vf \in \R^M$ and $\Vv \in \R^N$ 
and the linear objective 
\begin{equation}
    \label{eq:func_J}
    \FJ(\Vx, \Vf, \Vv) 
    \coloneqq 
    \frac{1}{2}\smashoperator[lr]{\sum_{n \in V}}
    \left(\Vv_n
    - 2 \langle \Vx_n, \Vy_n\rangle
    \right)
    + \frac{\lambda}{2}
    \smashoperator[lr]{\sum_{(n,m) \in E}}
    \left(\Vv_n + \Vv_m
    - 2\Vf_{(n,m)}\right)
\end{equation}
with the Euclidean inner product $\langle \cdot\, , \cdot \rangle$.
Therewith,
\eqref{eq:orig_tik_Hd} becomes equivalent to
\begin{equation}
    \argmin_{\Vx \in \hyper_d^N}
    \quad
    \FJ(\Vx, \Vf, \Vv)
    \quad\text{s.t.}\quad
    \left\{\begin{array}{ll}
        \Vv_n = \lVert\Vx_n\rVert_2^2 & \forall n \in V\\
        \Vf_{(n,m)} = \langle \Vx_n, \Vx_m\rangle \quad & \forall (n,m) \in E
    \end{array}\right\}.
    \label{eq:rew_tik_Hd}
\end{equation}
For any $\Vxi \in \R^{d+1}$,
we define $\tilde\Vxi \coloneqq (\xi_1, ..., \xi_d, -\xi_{d+1})^*$.
Further,
we denote 
the $(d+1)\times(d+1)$ identity
by $\MI_{d+1}$.
To encode the minimization over the hyperbolic sheet,
we introduce%
---based on the additional variables $\Vell \in \R^M$---%
the series of matrices
\begin{equation*}
    \MQ_{(n,m)} 
    \coloneqq
    \scalebox{0.8}{$\begin{aligned}\left[\begin{array}{c | c c | c c}
        \MI_{d+1} & \Vx_{n} & \tilde\Vx_{n} & \Vx_{m} & \tilde\Vx_{m} \\
        \hline
        \Vx_{n}^* & \Vv_n & -1 & \Vf_{(n,m)} & \Vell_{(n,m)}\\
        \tilde\Vx_{n}^* & -1 & \Vv_n & \Vell_{(n,m)} & \Vf_{(n,m)}\\
        \hline
        \Vx_{m}^* & \Vf_{(n,m)} & \Vell_{(n,m)} & \Vv_m & -1\\
        \tilde\Vx_{m}^* & \Vell_{(n,m)} & \Vf_{(n,m)} & -1 & \Vv_m
    \end{array}\right]\end{aligned}$}.
\end{equation*}

\begin{proposition}
    \label{prop:SmallMatPres}
    Let $\Vx_n, \Vx_m \in \Aff_{d+1}$.
    Then $\Vx_n, \Vx_m \in \hyper_d$, 
    $\Vv_n = \lVert\Vx_n\rVert_2^2$,
    $\Vv_m = \lVert\Vx_m\rVert_2^2$,
    $\Vell_{(n,m)} = \Mbiform(\Vx_n, \Vx_m)$,
    and $\Vf_{(n,m)} = \langle \Vx_n, \Vx_m\rangle$
    if and only if $\MQ_{(n,m)} \succeq \Mzero$
    and $\rk(\MQ_{(n,m)}) = d+1$.
\end{proposition}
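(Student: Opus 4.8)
The plan is to recognize $\MQ_{(n,m)}$ as a symmetric block matrix with an invertible leading block and to reduce the entire claim to a single Schur-complement identity. Concretely, I would collect the four ``tall'' columns into $\MB \coloneqq [\Vx_n \mid \tilde\Vx_n \mid \Vx_m \mid \tilde\Vx_m] \in \R^{(d+1)\times 4}$ and the bottom-right block into $\MS$, so that
\begin{equation*}
    \MQ_{(n,m)} = \begin{bmatrix} \MI_{d+1} & \MB \\ \MB^* & \MS \end{bmatrix}.
\end{equation*}
Since the leading block $\MI_{d+1}$ is positive definite, the standard congruence factorization by unit-triangular block matrices shows simultaneously that $\MQ_{(n,m)} \succeq \Mzero$ if and only if the Schur complement $\MS - \MB^*\MB \succeq \Mzero$, and that $\rk(\MQ_{(n,m)}) = (d+1) + \rk(\MS - \MB^*\MB)$.

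The next step is to evaluate the Gram matrix $\MB^*\MB$, where the crucial point is to express the Euclidean inner products of the columns through the Minkowski form. From the definition of the flip $\tilde\Vxi = (\xi_1,\dots,\xi_d,-\xi_{d+1})^*$ one reads off $\langle \Vxi, \tilde\Veta\rangle = \langle \tilde\Vxi, \Veta\rangle = \Mbiform(\Vxi, \Veta)$ and $\langle \tilde\Vxi, \tilde\Veta\rangle = \langle \Vxi, \Veta\rangle$. Substituting these relations into the entries of $\MB^*\MB$ and subtracting from $\MS$, the Schur complement turns out to be the symmetric $4\times 4$ matrix whose six distinct entries are exactly the defects
\begin{align*}
    &\Vv_n - \lVert\Vx_n\rVert_2^2, \quad \Vv_m - \lVert\Vx_m\rVert_2^2, \quad \Vf_{(n,m)} - \langle\Vx_n,\Vx_m\rangle, \\
    &-1 - \Mbiform(\Vx_n,\Vx_n), \quad -1 - \Mbiform(\Vx_m,\Vx_m), \quad \Vell_{(n,m)} - \Mbiform(\Vx_n,\Vx_m),
\end{align*}
each of which occupies two symmetric positions in the pattern.

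Combining the two facts, $\MQ_{(n,m)} \succeq \Mzero$ forces $\MS - \MB^*\MB \succeq \Mzero$, and under this PSD condition the requirement $\rk(\MQ_{(n,m)}) = d+1$ is equivalent to $\rk(\MS - \MB^*\MB) = 0$, i.e.\ to $\MS - \MB^*\MB = \Mzero$. Reading off the six defect equalities then yields precisely $\Vv_n = \lVert\Vx_n\rVert_2^2$, $\Vv_m = \lVert\Vx_m\rVert_2^2$, $\Vf_{(n,m)} = \langle\Vx_n,\Vx_m\rangle$, $\Vell_{(n,m)} = \Mbiform(\Vx_n,\Vx_m)$, together with $\Mbiform(\Vx_n,\Vx_n) = \Mbiform(\Vx_m,\Vx_m) = -1$. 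For the converse I would simply observe that if all these equalities hold, then $\MS - \MB^*\MB = \Mzero$, so $\MQ_{(n,m)}$ is positive semi-definite of rank exactly $d+1$.

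The one point needing care, and the reason the hypothesis $\Vx_n,\Vx_m \in \Aff_{d+1}$ appears, is the passage from $\Mbiform(\Vx,\Vx) = -1$ to the membership $\Vx \in \hyper_d$. The matrix condition only encodes the quadric $\Mbiform(\Vx,\Vx) = -1$, which describes \emph{both} sheets of the hyperboloid; the sign constraint $x_{d+1} > 0$ singling out $\hyper_d$ is supplied exactly by $\Aff_{d+1} = \R^d \times \R_{\geq 1}$, where $x_{d+1} \geq 1 > 0$. Everything else is a routine entry-by-entry comparison, so the only genuine ingredients are the Schur-complement rank formula and this affine-halfspace bookkeeping.
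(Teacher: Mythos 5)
Your proposal is correct and takes essentially the same route as the paper: both arguments reduce the claim to the Schur complement of the leading block $\MI_{d+1}$, which is exactly your defect matrix $\MS - \MB^*\MB$, and conclude that positive semi-definiteness together with $\rk(\MQ_{(n,m)}) = d+1$ holds precisely when this complement vanishes. The only differences are cosmetic and to your credit: you obtain the rank statement from the congruence factorization (rank additivity), where the paper argues separately that the last four rows must lie in the span of the first $d+1$, and you make explicit the point the paper leaves implicit, namely that the matrix condition only encodes the two-sheeted quadric $\Mbiform(\Vx,\Vx) = -1$ and the hypothesis $\Vx_n, \Vx_m \in \Aff_{d+1}$ is what selects the sheet $\hyper_d$.
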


\begin{proof}
    Because of the identity in the upper left,
    $\MQ_{(n,m)}$ has at least rank $d+1$.
    Now, 
    the last four rows can be represented by the first $d+1$ rows
    if and only if
    $\Vv_n = \lVert\Vx_n\rVert_2^2$,
    $\Vv_m = \lVert\Vx_m\rVert_2^2$,
    $\Vell_{(n,m)} = \Mbiform(\Vx_n, \Vx_m)$,
    $\Vf_{(n,m)} = \langle \Vx_n, \Vx_m\rangle$
    as well as
    $\Mbiform(\Vx_n,\Vx_n) = -1$
    and $\Mbiform(\Vx_m,\Vx_m) = -1$,
    i.e.\ $\Vx_n, \Vx_m \in \hyper_d$.
    Applying Schur's theorem \cite{HJ13}, 
    we have $\MQ_{(n,m)} \succeq \Mzero$
    if and only if
    the Schur complement
    \begin{equation*}
        \MQ_{(n,m)} / \MI_{d+1}
        =
        \scalebox{0.8}{$\begin{aligned}\left[\begin{array}{c c | c c}
             \Vv_n - \lVert\Vx_n\rVert_2^2 & - 1 - \Mbiform(\Vx_n, \Vx_n)  & \Vf_{(n,m)} - \langle \Vx_n, \Vx_m\rangle & \Vell_{(n,m)} - \Mbiform(\Vx_n, \Vx_m)\\
            - 1 - \Mbiform(\Vx_n, \Vx_n) & \Vv_n - \lVert\Vx_n\rVert_2^2 & \Vell_{(n,m)} - \Mbiform(\Vx_n, \Vx_m) & \Vf_{(n,m)} - \langle \Vx_n, \Vx_m\rangle \\
            \hline
            \Vf_{(n,m)} - \langle \Vx_n, \Vx_m\rangle & \Vell_{(n,m)} - \Mbiform(\Vx_n, \Vx_m) & \Vv_m - \lVert\Vx_m\rVert_2^2& - 1 - \Mbiform(\Vx_m, \Vx_m)\\
            \Vell_{(n,m)} - \Mbiform(\Vx_n, \Vx_m) & \Vf_{(n,m)} - \langle \Vx_n, \Vx_m\rangle & - 1 - \Mbiform(\Vx_m, \Vx_m) & \Vv_m - \lVert\Vx_m\rVert_2^2 
        \end{array}\right]\end{aligned}$}
    \end{equation*}
    is positive semi-definite. 
    Under the given assumptions,
    $\MQ_{(n,m)} / \MI_{d+1}$ becomes zero
    and hence positive semi-definite.
    \hfill$\Box$
\end{proof}

In light of Prop.~\ref{prop:SmallMatPres},
the Tikhonov-like denoising model \eqref{eq:rew_tik_Hd} is equivalent to
\begin{equation}       
    \argmin_{\substack{\Vx \in \Aff_{d+1}^N,\,  \Vell \in \R^M,\\ \Vf \in \R^M,\, \Vv \in \R^N}}
    \FJ(\Vx, \Vf, \Vv)
    \quad\text{s.t.}\quad
    \left\{
    \begin{array}{c}
        \MQ_{(n,m)} \succeq \Mzero \\
        \rk(\MQ_{(n,m)}) = d+1
    \end{array}
    \right\}
     \; \forall (n,m) \in E.
     \label{prob:UnConvRelaxedHyper}
\end{equation}
In the manner of \cite{BeBrSt24},
we convexify \eqref{prob:UnConvRelaxedHyper}
by neglecting the non-convex rank-constraints.
Thus,
we propose to solve the convex \emph{relaxed Tikhonov denoising model}:
\vspace{-0.2cm}
\begin{equation}   
\label{eq:conv_tik_Hd}
    \argmin_{\substack{\Vx \in \Aff_{d+1}^N, \, \Vell \in \R^M, \\ \Vf \in \R^M, \, \Vv \in \R^N}}
    \FJ (\Vx, \Vf, \Vv) 
    \quad \text{s.t.} \quad 
    \MQ_{(n,m)} \succeq \Mzero
    \quad \forall (n,m) \in E.
\end{equation}

%---------------------------------------------------------------------------------
\subsection{TV Regularization}
%---------------------------------------------------------------------------------

For denoising cartoon-like data,
the Tikhonov denoising model in §~\ref{sec:Tik_denoising_model} is not suitable.
Instead,
we consider the variational TV denoising model:
\begin{equation}
    \label{eq:orig_tik_Hd_TV}
    \argmin_{\Vx \in \hyper_d^N}
    \frac{1}{2}\sum_{n \in V}\lVert\Vx_n - \Vy_n\rVert_2^2
    + \mu \TV(\Vx),
    \quad\text{where}\quad
    \TV(\Vx) 
    \coloneqq 
    \smashoperator{\sum_{(n,m) \in E}}\lVert\Vx_n - \Vx_m\rVert_1
\end{equation}
and $\lVert\cdot\rVert_1$ denotes the Manhattan norm,
and $\mu > 0$ is the regularization parameter.
In the manner of \cite{BeBr24},
where the TV denoising on the hypersphere and SO(3) is considered,
we introduce the auxiliary parameter $\Vv \in \R^N$
and the (non-linear) objective 
\begin{equation*}
    \FK (\Vx, \Vv) 
    \coloneqq 
    \frac{1}{2}\sum_{n \in V} (\Vv_n - 2\langle \Vx_n,\Vy_n\rangle) 
    + \mu \TV(\Vx).
\end{equation*}
Hence,
\eqref{eq:orig_tik_Hd_TV} becomes equivalent to 
\begin{align}
    \label{eq:rew_tik_Hd_TV}
    \argmin_{\Vx \in \hyper_d^N}
    \;
    \FK(\Vx, \Vv)
    \quad\text{s.t.}\quad
    \Vv_n = \lVert\Vx_n\rVert_2^2
    \quad\forall n \in V.
\end{align}
Applying Prop.~\ref{prop:SmallMatPres},
we may rewrite \eqref{eq:rew_tik_Hd_TV} as
\begin{align}
    \argmin_{\substack{\Vx \in \Aff_{d+1}^N, \Vell \in \R^M \\
    \Vf \in \R^M,\Vv \in \R^N}}
    \FK (\Vx, \Vv) 
    \quad\text{s.t.}\quad
    \left\{\begin{array}{c}
        \MQ_{(n,m)} \succeq 0 \\
        \rk(\MQ_{(n,m)}) = d+1
    \end{array}\right\}
    \quad \forall
    (n,m) \in E.
    \label{eq:orig_tik_Hd_TV_rew}
\end{align}
Different from §~\ref{sec:Tik_denoising_model},
where $\Vf$ is essential to linearize the objective,
and where $\MQ_{(n,m)}$ encodes $\Vf_{(n,m)} = \langle \Vx_n, \Vx_m \rangle$,
the auxiliary variables $\Vf$ and $\Vell$ here seem to be superfluous.
We derive an alternative encoding of the hyperbolic sheet.

\begin{proposition}
    \label{prop:TinyMatPres}
    Let $\Vx_n \in \Aff_{d+1}$.
    Then $\Vx_n\in \hyper_d$ and $\Vv_n = \lVert\Vx_n\rVert_2^2$
    if and only if 
    \begin{equation*}
        \MV_n 
        \coloneqq 
        \scalebox{0.8}{$\begin{aligned}\left[\begin{array}{c | c c}
            \MI_{d+1} & \Vx_{n} & \tilde\Vx_{n} \\
            \hline
            \Vx_{n}^* & \Vv_n & -1 \\
            \tilde\Vx_{n}^* & -1 & \Vv_n
        \end{array}\right]\end{aligned}$}
        \succeq 0
        \quad\text{and}\quad
        \rk(\MV_n) = d+1.
    \end{equation*}
\end{proposition}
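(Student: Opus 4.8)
The plan is to follow the strategy of Prop.~\ref{prop:SmallMatPres} almost verbatim, since $\MV_n$ is precisely the principal submatrix of $\MQ_{(n,m)}$ obtained by deleting the rows and columns associated with $\Vx_m$. First I would record that, because of the identity block $\MI_{d+1}$ in the upper left, the matrix $\MV_n$ always has rank at least $d+1$, and that its two bottom rows are linear combinations of the first $d+1$ rows exactly when $\rk(\MV_n) = d+1$.

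The key step is to invoke Schur's theorem \cite{HJ13} with respect to the invertible block $\MI_{d+1}$ and to compute the Schur complement
\begin{equation*}
    \MV_n / \MI_{d+1}
    =
    \begin{bmatrix}
        \Vv_n - \lVert\Vx_n\rVert_2^2 & -1 - \Mbiform(\Vx_n,\Vx_n) \\
        -1 - \Mbiform(\Vx_n,\Vx_n) & \Vv_n - \lVert\Vx_n\rVert_2^2
    \end{bmatrix},
\end{equation*}
where I would use the elementary identities $\langle \Vx_n, \Vx_n\rangle = \langle \tilde\Vx_n, \tilde\Vx_n\rangle = \lVert\Vx_n\rVert_2^2$ and $\langle \Vx_n, \tilde\Vx_n\rangle = \Mbiform(\Vx_n,\Vx_n)$ to evaluate the product $[\Vx_n \; \tilde\Vx_n]^* \MI_{d+1}^{-1} [\Vx_n \; \tilde\Vx_n]$. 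By Schur's theorem, $\MV_n \succeq \Mzero$ holds if and only if this $2 \times 2$ Schur complement is positive semi-definite.

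Next I would combine the rank condition with positive semidefiniteness. Since $\rk(\MV_n) = \rk(\MI_{d+1}) + \rk(\MV_n / \MI_{d+1}) = (d+1) + \rk(\MV_n / \MI_{d+1})$, the requirement $\rk(\MV_n) = d+1$ forces the Schur complement to be the zero matrix. Reading off its entries then yields $\Vv_n = \lVert\Vx_n\rVert_2^2$ and $\Mbiform(\Vx_n,\Vx_n) = -1$; together with the standing assumption $\Vx_n \in \Aff_{d+1}$, whose last coordinate is at least $1$ and in particular positive, the latter equation is exactly the statement $\Vx_n \in \hyper_d$. Conversely, under these two equations the Schur complement vanishes, hence is trivially positive semi-definite and of rank $0$, so that $\MV_n \succeq \Mzero$ and $\rk(\MV_n) = d+1$.

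I do not expect a genuine obstacle, as the argument is a direct specialization of Prop.~\ref{prop:SmallMatPres}. The only points needing a little care are the passage from the rank constraint to the vanishing of the Schur complement -- one must note that a nonzero positive semi-definite Schur complement would strictly raise the rank beyond $d+1$ -- and the remark that the sign condition $x_{d+1} > 0$ encoded in $\Aff_{d+1}$ is what singles out the upper hyperbolic sheet rather than its reflection.
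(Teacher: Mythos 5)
Your proof is correct and follows essentially the same route the paper intends, namely the one sketched for Prop.~\ref{prop:SmallMatPres}: taking the Schur complement of $\MV_n$ with respect to the invertible block $\MI_{d+1}$, using $\lVert\tilde\Vx_n\rVert_2 = \lVert\Vx_n\rVert_2$ and $\langle \Vx_n, \tilde\Vx_n\rangle = \Mbiform(\Vx_n,\Vx_n)$, and noting that the sign condition comes from $\Vx_n \in \Aff_{d+1}$. Your handling of the rank constraint via the additivity formula $\rk(\MV_n) = \rk(\MI_{d+1}) + \rk(\MV_n/\MI_{d+1})$ is a slightly cleaner packaging of the paper's row-dependence argument, but it is the same idea, not a different method.
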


The statement can be established in analogy to Prop.~\ref{prop:SmallMatPres}
using Schur's theorem.
On the basis of Prop.~\ref{prop:TinyMatPres},
the TV denoising model \eqref{eq:rew_tik_Hd_TV} is also equivalent to 
\vspace{-0.1cm}
\begin{equation*}
    \argmin_{\substack{\Vx \in \Aff_{d+1}^N, \Vv \in \R^N}} 
    \FK(\Vx, \Vv)
    \quad\text{s.t.}\quad
    \left\{\begin{array}{c}
        \MV_{(n)} \succeq 0 \\
        \rk(\MV_{(n)}) = d+1
    \end{array}\right\}
    \quad\text{for all}\quad
    n \in V.
    %\label{eq:red_tik_Hd_TV}
\end{equation*}
Neglecting the rank constraint,
we propose to solve the convex \emph{relaxed TV denoising model}:
\begin{equation}   
    \label{eq:conv_red_tik_Hd_TV}
    \argmin_{\substack{\Vx \in \Aff_{d+1}^N, \Vv \in \R^N}}
    \FK (\Vx, \Vv) 
    \quad \text{s.t.} \quad 
    \MV_{n} \succeq 0
    \quad \forall n \in V.
\end{equation}
Notice
that the relaxed version of \eqref{eq:orig_tik_Hd_TV_rew}%
---without the rank constraint---%
and the proposed model \eqref{eq:conv_red_tik_Hd_TV} are equivalent:
if $(\hat\Vx, \hat\Vell, \hat\Vf, \hat\Vv)$ is a solution of the relaxed version of \eqref{eq:orig_tik_Hd_TV_rew},
then $(\hat\Vx, \hat\Vv)$ is a solution of \eqref{eq:conv_red_tik_Hd_TV};
if $(\hat\Vx, \hat\Vv)$ is a solution of \eqref{eq:conv_red_tik_Hd_TV},
then $(\hat\Vx, (\eta(\hat\Vx_n, \hat\Vx_m))_{(n,m) \in E}, (\langle \hat\Vx_n, \hat\Vx_m\rangle)_{(n,m) \in E}, \hat\Vv)$
is a solution of the relaxed version of \eqref{eq:orig_tik_Hd_TV_rew}.
The positive semi-definiteness can be transferred between $\MQ_{(n,m)}$, $\MV_n$, and $\MV_m$
utilizing Schur's theorem and Sylvester's criterion \cite{HJ13}.

% -------------------------------------------------------------------------------------
\section{Denoising Models and Algorithms}          \label{sec:algo}
% -------------------------------------------------------------------------------------

In this section
we solve both convex relaxed problems,
defined in \eqref{eq:conv_tik_Hd} and \eqref{eq:conv_red_tik_Hd_TV},
employing the alternating directions methods of multipliers (ADMM) \cite{GM75}.
In general,
ADMM allows to solve convex minimization problems of the form
\begin{align}
    \label{eq:split}
    \argmin\nolimits_{\MY, \MU}
    \quad
    \FF(\MY) + \FG(\MU)
    \quad\text{s.t.}\quad
    \FL(\MY) = \MU,
\end{align}
where $\FF$ and $\FG$ are convex,
and $\FL$ is linear,
using the iteration:
\begin{subequations}
\label{eq:ADMM}
\begin{align}
    \MY^{(k+1)}
    & \coloneqq
    \argmin\nolimits_{\MY} 
    \quad 
    \FF(\MY)
    + \tfrac{\rho}{2} \lVert\FL(\MY) - \MU^{(k)} + \MZ^{(k)}\rVert^2,
    \label{eq:ADMM_I}\\
    \MU^{(k+1)}
    & \coloneqq
    \argmin\nolimits_{\MU} 
    \quad
    \FG(\MU)
    + \tfrac{\rho}{2} \lVert\FL(\MY^{(k+1)}) - \MU + \MZ^{(k)}\rVert^2, 
    \label{eq:ADMM_II}\\
    \MZ^{(k+1)}
    & \coloneqq
    \MZ^{(k)} + \FL(\MY^{(k+1)}) - \MU^{(k+1)}.
    \label{eq:ADMM_III}
\end{align}
\end{subequations}

% -----------------------------------------------------------------------------------------------
\subsection{Solving the Relaxed Tikhonov-like denoising model}      \label{sec:sol_relaxed_tik}
% -----------------------------------------------------------------------------------------------

To solve \eqref{eq:conv_tik_Hd},
we rely on the following splitting:
\begin{align}
    \label{eq:ADMM_Tik}
    \argmin_{\Vx, \Vell, \Vf, \Vv, \MU}
        \FJ(\Vx, \Vf, \Vv)
        +
        \FI(\Vx)
        + 
        \FG(\MU)
        \quad\text{s.t.}\quad
        \FQ(\Vx, \Vell, \Vf, \Vv) = \MU
        \in \Msym_{d+5}^M,
\end{align}
where $\Msym_{d+5}$ denotes the symmetric $(d+5) \times (d+5)$ matrices. 
Using the indicator function $\iota_\bullet$,
which is zero on the indicated set
and $+\infty$ otherwise,
we define 
\begin{align}
    \label{eq:indicator_func}
    \FI(\Vx) 
    \coloneqq 
    \sum_{n \in V} \iota_{\Aff_{d+1}}(\Vx_n)
    \quad\text{and}\quad
    \FG(\MU) 
    \coloneqq 
    \sum_{(n,m) \in E} \iota_{\mathcal C}(\MU_{(n,m)}) \\[-20pt]\notag
\end{align}
with
$\ME \coloneqq \diag(\MI_{d+1}, 
    \left[\begin{smallmatrix}
        0 & -1 \\ -1 & 0
    \end{smallmatrix}\right],
    \left[\begin{smallmatrix}
        0 & -1 \\ -1 & 0
    \end{smallmatrix}\right])$ 
and $\mathcal C \coloneqq \{\MA \in \Msym_{d+5}: \MA \succeq - \ME\}$.
Finally,
the linear operator $\FQ \coloneqq (\FQ_{(n,m)})_{(n,m) \in E}$
is defined as
\begin{align*}
    \FQ_{(n,m)}(\Vx, \Vell, \Vf, \Vv) \coloneqq \MQ_{(n,m)} - \ME.
\end{align*}

To apply \eqref{eq:ADMM},
we define $\lVert\MU\rVert^2 \coloneqq \sum_{(n,m) \in E} \lVert\MU_{(n,m)}\rVert_{\tF}^2$,
where $\lVert\cdot\rVert_{\tF}$ denotes the Frobenius norm.
Further,
we require the adjoint of $\FQ$.
Based on the Kronecker delta $\delta_{i,j}$,
the adjoint of $\FQ$ with respect to the argument $\Vx$ is given by
\begin{align*}  
    \bigl((\FQ_{\Vx}^* (\MU))_{n}\bigr)_i
    &= 2 \, \Bigl[
    \smashoperator[r]{\sum_{(n,m) \in E}} \,
    (\MU_{(n,m)})_{i,d+2} + (-1)^{\delta_{i,d+1}}(\MU_{(n,m)})_{i,d+3} 
    \Bigr] \notag \\[-10pt]
    & \hspace{2cm} 
    + 2 \, \Bigl[
    \smashoperator[r]{\sum_{(m,n) \in E}} \,
    (\MU_{(m,n)})_{i,d+4} + (-1)^{\delta_{i,d+1}}(\MU_{(m,n)})_{i,d+5} 
    \Bigr]
\end{align*}
for $n \in V$ and $1 \leq i \leq d+1$;
the adjoint with respect to $\Vell$ and $\Vf$ are given by
\begin{align*}        
    (\FQ_{\Vell}^* (\MU))_{(n,m)}
    = 2 \left[(\MU_{(n,m)})_{d+5,d+2} + (\MU_{(n,m)})_{d+4,d+3}\right]\\[-12pt]
    \\
    (\FQ_{\Vf}^* (\MU))_{(n,m)}
    = 2\left[(\MU_{(n,m)})_{d+4,d+2} + (\MU_{(n,m)})_{d+5,d+3}\right]
\end{align*}   
for $(n,m) \in E$;
and the adjoint with respect to $\Vv$ is given by
\begin{align*}           
    (\FQ_{\Vv}^* (\MU))_{n}
    & = \Bigl[
    \smashoperator[r]{\sum_{(n,m) \in E}} \,
    (\MU_{(n,m)})_{d+2,d+2} + (\MU_{(n,m)})_{d+3,d+3}
    \Bigr] \notag \\[-7pt]
    & \hspace{2cm} + \Bigl[
    \smashoperator[r]{\sum_{(m,n) \in E}} \,
    (\MU_{(m,n)})_{d+4,d+4} + (\MU_{(m,n)})_{d+5,d+5}
    \Bigr].
\end{align*}
for $n \in V$.
ADMM for \eqref{eq:ADMM_Tik}
has the following explicit form.

\begin{theorem}
    \label{thm:solADMM_Tik}
    Let $\nu_n \coloneqq \lvert\{m \in V : (n,m) \in E \rvert +  \lvert\{m \in V : (m,n) \in E\}\rvert$
    for $n \in V$.
    For $\rho > 0$,
    \textup{ADMM} \eqref{eq:ADMM}
    applied to \eqref{eq:ADMM_Tik}
    reads as 
    \begin{align*}
        \Vx^{(k+1)}_n 
        &\coloneqq
        \proj_{\Aff_{d+1}} \bigl(\tfrac{1}{4\nu_n} \,\bigl(\FQ_{\Vx}^*(\MU^{(k)} - \MZ^{(k)})_n + \tfrac{1}{\rho} \, \Vy_n\bigr)\bigr)
        \quad \forall n \in V,
        \\
        \Vell^{(k+1)} 
        &\coloneqq
        \tfrac{1}{4\rho} \,
        \FQ_{\Vell}^*(\MU^{(k)} - \MZ^{(k)}), 
        \\
        \Vf^{(k+1)} 
        &\coloneqq
        \tfrac{1}{4} \,
        \bigl(\FQ_{\Vf}^*(\MU^{(k)} - \MZ^{(k)}) 
        + \tfrac{\lambda}{\rho} \, \Vones_M \bigr) 
        \\
        \Vv^{(k+1)}_n 
        &\coloneqq
        \tfrac{1}{2\nu_n} \,
        \bigl(\FQ_{\Vv}^*(\MU^{(k)} - \MZ^{(k)})_n 
        - \tfrac{1 + \nu_n\lambda}{2\rho}\bigr)
        \quad \forall n \in V,
        \\
        \MU_{(n,m)}^{(k+1)}
        &\coloneqq
        \proj_{\succeq \Mzero}\bigl(\FQ_{(n,m)}([\Vx, \Vell, \Vf, \Vv]^{(k+1)}) + \ME + \MZ_{(n,m)}^{(k)}\bigr) - \ME
        \quad\forall (n,m) \in E, 
        \\
        \MZ^{(k+1)}
        &\coloneqq
        \MZ^{(k)}
        + \FQ(\MU^{(k+1)}) 
        - \MU^{(k+1)},
    \end{align*}
    where $\proj_{\Aff_{d+1}}$ is the projection onto $\Aff_{d+1}$,
    $\proj_{\succeq \Mzero}$ onto the positive semi-definite matrices,
    and $\Vones_\bullet$ is the all ones vector.
\end{theorem}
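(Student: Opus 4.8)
The plan is to specialize the generic scaled ADMM iteration \eqref{eq:ADMM} to the splitting \eqref{eq:ADMM_Tik} by identifying $\MY \coloneqq (\Vx, \Vell, \Vf, \Vv)$, $\FF \coloneqq \FJ + \FI$, $\FG$ as in \eqref{eq:indicator_func}, and $\FL \coloneqq \FQ$. Note first that subtracting $\ME$ makes $\FQ$ genuinely linear: the constant entries of $\MQ_{(n,m)}$, namely the block $\MI_{d+1}$ and the four $-1$ off-diagonal entries, are exactly cancelled by $\ME$, so \eqref{eq:ADMM} applies. I will then work out the three steps \eqref{eq:ADMM_I}--\eqref{eq:ADMM_III} in turn, abbreviating $\MW^{(k)} \coloneqq \MU^{(k)} - \MZ^{(k)}$ and using that $\MU^{(k)}$ and $\MZ^{(k)}$ remain symmetric by induction (the PSD projection and the dual update preserve symmetry), which is what lets the adjoint formulas for $\FQ^*$ combine symmetric entries with the stated factors of two.

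The key observation for the primal step \eqref{eq:ADMM_I} is that the penalty $\tfrac{\rho}{2}\lVert\FQ(\MY) - \MW^{(k)}\rVert^2$ is a sum of squared Frobenius norms, hence a sum over individual matrix entries, and that the unknowns $\Vx_n$, $\Vell_{(n,m)}$, $\Vf_{(n,m)}$, $\Vv_n$ occupy pairwise disjoint sets of entries of the matrices $\MQ_{(n,m)}$. The primal subproblem therefore decouples into independent convex quadratic problems, one per unknown, which I solve by setting the gradient to zero. Writing the gradient of the penalty as $\rho\,\FQ_{\bullet}^*(\FQ(\MY) - \MW^{(k)})$ and evaluating $\FQ_{\bullet}^*(\FQ(\MY))$ yields the coefficients $4\nu_n$ for $\Vx_n$, $2\nu_n$ for $\Vv_n$, and $4$ for each of $\Vell_{(n,m)}$ and $\Vf_{(n,m)}$; the degree $\nu_n$ enters for $\Vx_n$ and $\Vv_n$ because these appear in every incident matrix $\MQ_{(n,m)}$ and $\MQ_{(m,n)}$. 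The linear part $\FJ$ supplies the inhomogeneous terms $-\Vy_n$, $-\lambda$, and $\tfrac{1+\nu_n\lambda}{2}$ for $\Vx_n$, $\Vf_{(n,m)}$, and $\Vv_n$, respectively, which after division by $\rho$ produce the constants $\tfrac{1}{\rho}\Vy_n$, $\tfrac{\lambda}{\rho}$, and $\tfrac{1+\nu_n\lambda}{2\rho}$.

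For $\Vx_n$ in particular, the penalty has isotropic Hessian $4\rho\nu_n\MI_{d+1}$, so adjoining the indicator $\iota_{\Aff_{d+1}}$ turns the scalar-weighted least-squares problem into a genuine Euclidean projection, which gives the $\proj_{\Aff_{d+1}}$ formula; here the tilde-induced sign $(-1)^{\delta_{i,d+1}}$ in $\FQ_{\Vx}^*$ must be carried through, reflecting the joint appearance of $\Vx_n$ and $\tilde\Vx_n$. The matrix step \eqref{eq:ADMM_II} is, per edge, the proximal map of the indicator $\iota_{\mathcal C}$ with $\mathcal C = \{\MA : \MA \succeq -\ME\}$, i.e.\ the projection onto $\mathcal C$ of $\FQ_{(n,m)}(\MY^{(k+1)}) + \MZ_{(n,m)}^{(k)}$; rewriting $\proj_{\mathcal C}(\MA) = \proj_{\succeq\Mzero}(\MA + \ME) - \ME$ then yields the stated PSD-projection form. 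Step \eqref{eq:ADMM_III} is copied verbatim as the dual update.

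I expect the main obstacle to be the entry-level bookkeeping in the primal step: verifying that the unknowns really do occupy disjoint entries (so the decoupling is exact), and computing each $\FQ_{\bullet}^*(\FQ(\MY))$ with the correct degree factor $\nu_n$ and the tilde signs, so that the closed-form prefactors $\tfrac{1}{4\nu_n}$, $\tfrac{1}{2\nu_n}$, and $\tfrac{1}{4}$ emerge exactly. The remaining subtlety is justifying, via the maintained symmetry of $\MW^{(k)}$, that the adjoint expressions assemble the symmetric off-diagonal entries with the factors of two used above.
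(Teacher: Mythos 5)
Your proposal is correct and follows essentially the same route as the paper's own proof: specializing ADMM \eqref{eq:ADMM} to the splitting \eqref{eq:ADMM_Tik}, exploiting that the quadratic penalty decouples all variables (and coordinates), solving each subproblem by setting the gradient to zero via the identities for $\FQ^*_{\bullet}\FQ$ --- where your explicit degree factors $4\nu_n$ for $\Vx_n$ and $2\nu_n$ for $\Vv_n$ are in fact more careful than the per-edge identities ``$4\Vx_n$'', ``$2\Vv_n$'' written in the paper, and your symmetry-by-induction remark correctly justifies the factors of two in the adjoints --- and reducing the $\MU$-step to $\proj_{\succeq\Mzero}$ through the shift $\tilde\MU \coloneqq \MU + \ME$, exactly as the paper does. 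One point worth flagging: since $\FJ$ contains no $\Vell$-term, your derivation yields $\Vell^{(k+1)} = \tfrac{1}{4}\,\FQ^*_{\Vell}(\MU^{(k)} - \MZ^{(k)})$ with no $\rho$ (matching the pattern of the other updates, where $\rho$ only divides the linear coefficients $\Vy_n$, $\lambda$, $\tfrac{1+\nu_n\lambda}{2}$), so the prefactor $\tfrac{1}{4\rho}$ in the printed statement is a typo that both your computation and the paper's own proof implicitly correct; likewise the stated dual update should read $\MZ^{(k+1)} = \MZ^{(k)} + \FQ([\Vx,\Vell,\Vf,\Vv]^{(k+1)}) - \MU^{(k+1)}$, which is what your verbatim use of \eqref{eq:ADMM_III} gives.
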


The projections can be computed explicitly,
cf.\ \cite[Thm.~5.2]{BeBrSt24} and \cite[p.~399]{BV04}.
For $\Vxi \in \R^{d+1}$ 
and the eigenvalue decomposition $\MA = \MV \MSigma \MV^* \in \Msym_{d+5}$,
we have
\begin{subequations}
    \label{eq:projections}
    \begin{align}
        \proj_{\Aff_{d+1}}(\Vxi) 
        &= 
        (\xi_1, \dots, \xi_d, \max\{\xi_{d+1}, 1\})^*,
        \\
        \proj_{\succeq \Mzero} (\MA)
        &=
        \MV\max(\MSigma,0)\MV^*.
    \end{align}
\end{subequations}

\begin{proof}
    The minimizer of \eqref{eq:ADMM_I} can be easily computed 
    as in \cite[Thm.~5.1]{BeBrSt24}.
    The objective with $\FF$ chosen as $\FJ + \FI$ is quadratic
    and decouples all variables. 
    In order to compute the derivative,
    we can exploit
    \begin{align*}
        \FQ^*_{\Vx}(\FQ(\Vx, \Vell, \Vf, \Vv))_n 
        &= 
        4 \Vx_n,
        &
        \FQ^*_{\Vell}(\FQ(\Vx, \Vell, \Vf, \Vv))_{(n,m)} 
        &=
        4 \Vell_{(n,m)},
        \\
        \FQ^*_{\Vf}(\FQ(\Vx, \Vell, \Vf, \Vv))_{(n,m)} 
        &=
        4 \Vf_{(n,m)},
        &
        \FQ^*_{\Vv}(\FQ(\Vx, \Vell, \Vf, \Vv))_{n} 
        &= 
        2 \Vv_{n}.
    \end{align*}
    For $(\Vx_n)_{d+1}$,
    we have the constraint $(\Vx_n)_{d+1} \ge 1$,
    which results in the projection onto $\Aff_{d+1}$.
    In \eqref{eq:ADMM_II},
    $\MU_{(n,m)}$ are decoupled,
    and we have
    \begin{align*}
        \MU_{(n,m)}^{(k+1)}
        &=
        \argmin_{\MU \succeq - \ME} \, 
            \lVert\FQ_{(n,m)}([\Vx, \Vell, \Vf, \Vv]^{(k+1)}) - \MU + \MZ_{(n,m)}^{(k)}\rVert_{\tF}^2
        \\
        &= 
        \Bigl[\,\argmin_{\tilde \MU \coloneqq \MU + \ME \succeq \Mzero} \; 
        \lVert(\FQ_{(n,m)}([\Vx, \Vell, \Vf, \Vv]^{(k+1)}) + \ME + \MZ_{(n,m)}^{(k)}) - \tilde\MU\rVert_{\tF}^2 \Bigr] - \ME  \\
        &= 
        \proj_{\succeq \Mzero}(\MQ_{(n,m)}^{(k+1)} + \MZ_{(n,m)}^{(k)}) - \ME.
        \tag*\qed
    \end{align*}
\end{proof}

% -------------------------------------------------------------------------------------
\subsection{Solving the Relaxed TV denoising model}   \label{sec:sol_relaxed_tv}
% -------------------------------------------------------------------------------------

Similar to §~\ref{sec:sol_relaxed_tik},
we relay on the following splitting,
to solve \eqref{eq:conv_red_tik_Hd_TV}:
\begin{align}
    \label{eq:ADMM_TV}
    \argmin_{\Vx,\Vv,\Vu,\MU} \;
    \FK(\Vx, \Vv)
    + 
    \FI(\Vu)
    +
    \FH(\MU)
    \quad\text{s.t.}\quad
    \Vx = \Vu
    \;\;\text{and}\;\;
    \FV(\Vx, \Vv) = \MU \in \Msym_{d+3}^N.
\end{align}
Here,
$\FI$ is defined as in \eqref{eq:indicator_func}
and 
\begin{align*}
    \FH(\MU) \coloneqq \sum_{n \in V} \iota_{\hat {\mathcal C}}(\MU_{n}),
\end{align*}
where $\hat\ME \coloneqq \diag(\MI_{d+1}, 
\left[\begin{smallmatrix}
        0 & -1 \\
        -1 & 0
    \end{smallmatrix}\right])$
and $\hat{\mathcal C} \coloneqq \{\MA \in \Msym_{d+3} : \MA + \hat \ME \succeq \Mzero\}$.
The linear operator $\FV \coloneqq (\FV_n)_{n \in V}$
is defined as 
\begin{equation*}
    \FV_{n}(\Vx,\Vv) \coloneqq \MV_{n} - \hat\ME.
\end{equation*}

To apply \eqref{eq:ADMM},
we define $\lVert\MU\rVert^2 \coloneqq \sum_{n \in V} \lVert\MU_{n}\rVert_{\tF}^2$
and derive the adjoint of $\FV$
that%
---with respect to the arguments $\Vx$
and $\Vv$---%
is given by 
\begin{subequations}
    \label{eq:adj_V}
    \begin{align}       
        \bigl((\FV_{\Vx}^* (\MU))_{n}\bigr)_i
        & = 
        2 \, \bigl[ (\MU_{n})_{i,d+2} 
        + (-1)^{\delta_{i,d+1}} (\MU_{n})_{i,d+3} \bigr], 
        \label{eq:adj_x_V}
        \\
        (\FV_{\Vv}^* (\MU))_{n} 
        & = (\MU_{n})_{d+2,d+2} + (\MU_{n})_{d+3,d+3}
        \label{eq:adj_v_V}
    \end{align}
\end{subequations}
for $n \in V$ and $1 \leq i \leq d+1$.
ADMM for \eqref{eq:ADMM_TV} has the following form.

\begin{theorem}
    \label{thm:solADMM_TV}
    For $\rho > 0$,
    \textup{ADMM} \eqref{eq:ADMM} applied to \eqref{eq:ADMM_TV}
    reads as
    \begin{align*}
        \Vx^{(k+1)}
        &\coloneqq
        \prox\nolimits_{\nicefrac{\mu}{5\rho} \TV}
                \bigl(\tfrac{1}{5} [ \FV^*_{\Vx}(\MU^{(k)} - \MZ^{(k)}) 
                + \tfrac{1}{\rho} \, \Vy 
                + (\Vu^{(k)} - \Vz^{(k)})]\bigr),
        \\
        \Vv^{(k+1)}
        &\coloneqq
        \tfrac{1}{2\rho}
        \bigl(\rho\FV^{*}_{\Vv}(\MU^{(k)} - \MZ^{(k)}) 
        - \tfrac{1}{2} \Vones_N \bigr), 
        \\
        \MU^{(k+1)}_{n} 
        &\coloneqq
        \proj_{\succeq \Mzero}(\MV_n^{(k+1)} + \MZ_n^{(k)}) 
        - \hat\ME
        \quad \forall n \in V, 
        \\
        \Vu^{(k+1)}_n
        &\coloneqq 
        \proj_{\Aff_{d+1}}(\Vx^{((k+1)}_n + \Vz^{(k)}_n)
        \quad \forall n \in V,
        \\
        \MZ^{(k+1)}
        &\coloneqq
        \MZ^{(k)} + \FV([\Vx, \Vv]^{(k+1)}) - \MU^{(k+1)}, \\
        \Vz^{(k+1)} 
        &\coloneqq 
        \Vz^{(k)} + \Vx^{(k+1)} - \Vu^{(k+1)},
    \end{align*}
    where $\prox_{f} \coloneqq \argmin_{\Vy} \{f(\Vy) + \frac{1}{2}\lVert \,\cdot - \Vy\rVert^2\}$
    for convex, lower semi-continuous, proper $f$.
\end{theorem}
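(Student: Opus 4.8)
The plan is to recognize \eqref{eq:ADMM_TV} as an instance of the generic splitting \eqref{eq:split}, with primal block $\MY = (\Vx, \Vv)$ and $\FF = \FK$, linear operator $\FL(\Vx, \Vv) \coloneqq (\Vx, \FV(\Vx, \Vv))$, and second block $\FG(\Vu, \MU) \coloneqq \FI(\Vu) + \FH(\MU)$ carrying the two indicator terms. There are correspondingly two dual variables, $\Vz$ for the constraint $\Vx = \Vu$ and $\MZ$ for $\FV(\Vx, \Vv) = \MU$, so that the three ADMM steps \eqref{eq:ADMM_I}--\eqref{eq:ADMM_III} become an update of $(\Vx, \Vv)$, a (decoupled) update of $(\Vu, \MU)$, and the two dual updates. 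As a preliminary I would record the operator identities $\FV_{\Vv}^* \FV_{\Vv} = 2\,\mathrm{Id}$ and $\FV_{\Vx}^* \FV_{\Vx} = 4\,\mathrm{Id}$, both of which follow directly from the adjoint formulas \eqref{eq:adj_V} and the placement of $\Vx_n, \tilde\Vx_n, \Vv_n$ inside $\MV_n$ (using $(-1)^{2\delta_{i,d+1}} = 1$), exactly as for the analogous $\FQ$-identities in the proof of Thm.~\ref{thm:solADMM_Tik}.

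For step \eqref{eq:ADMM_I}, the objective $\FK(\Vx,\Vv) + \tfrac{\rho}{2}\lVert\Vx - \Vu^{(k)} + \Vz^{(k)}\rVert^2 + \tfrac{\rho}{2}\lVert\FV(\Vx,\Vv) - \MU^{(k)} + \MZ^{(k)}\rVert^2$ separates into an $\Vx$-part and a $\Vv$-part. In $\Vv$ it is smooth and quadratic, so I would set its gradient to zero: the linear term $\tfrac12\sum_n \Vv_n$ contributes $\tfrac12$, while $\FV_{\Vv}^*\FV_{\Vv} = 2\,\mathrm{Id}$ turns the quadratic into $2\rho\Vv_n$, and solving for $\Vv_n$ yields the stated $\Vv^{(k+1)}$. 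In $\Vx$ the only non-smooth term is $\mu\TV(\Vx)$; collecting the three quadratic contributions (the identity coupling gives Hessian $\rho\,\mathrm{Id}$ and $\FV_{\Vx}^*\FV_{\Vx} = 4\,\mathrm{Id}$ gives $4\rho\,\mathrm{Id}$, hence $5\rho\,\mathrm{Id}$ in total) shows the smooth part equals $\tfrac{5\rho}{2}\lVert\Vx - \Vc\rVert^2$ up to a constant, with centre $\Vc = \tfrac{1}{5}[\FV_{\Vx}^*(\MU^{(k)} - \MZ^{(k)}) + \tfrac{1}{\rho}\Vy + (\Vu^{(k)} - \Vz^{(k)})]$ obtained by matching the linear term, where the $-\langle\Vx_n,\Vy_n\rangle$ in $\FK$ supplies the $\tfrac1\rho\Vy$ contribution. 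Dividing through by $5\rho$ and invoking the definition of the proximal map identifies the minimizer as $\prox_{\frac{\mu}{5\rho}\TV}(\Vc)$, which is the claimed $\Vx^{(k+1)}$.

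For step \eqref{eq:ADMM_II}, the functional $\FI(\Vu) + \FH(\MU) + \tfrac\rho2\lVert\Vx^{(k+1)} - \Vu + \Vz^{(k)}\rVert^2 + \tfrac\rho2\lVert\FV([\Vx,\Vv]^{(k+1)}) - \MU + \MZ^{(k)}\rVert^2$ separates completely into a $\Vu$-part and an $\MU$-part, each a projection onto the respective feasible set. The $\Vu_n$-part gives $\proj_{\Aff_{d+1}}(\Vx_n^{(k+1)} + \Vz_n^{(k)})$, and for the $\MU_n$-part, with $\FH$ the indicator of $\hat{\mathcal C} = \{\MA : \MA + \hat\ME \succeq \Mzero\}$, I would substitute $\tilde\MU_n = \MU_n + \hat\ME \succeq \Mzero$ so that the projection onto $\hat{\mathcal C}$ becomes a PSD-cone projection of $\FV_n([\Vx,\Vv]^{(k+1)}) + \hat\ME + \MZ_n^{(k)} = \MV_n^{(k+1)} + \MZ_n^{(k)}$, yielding $\proj_{\succeq\Mzero}(\MV_n^{(k+1)} + \MZ_n^{(k)}) - \hat\ME$. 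The two dual updates \eqref{eq:ADMM_III}, one per constraint, are then read off directly. The step demanding the most care is the $\Vx$-update: one must correctly aggregate all three quadratic contributions to obtain the factor $5\rho$ and the exact centre $\Vc$ before the remaining non-smooth problem can be recognized as a scaled TV-prox; everything else reduces to the same Schur-complement and projection bookkeeping already used for Thm.~\ref{thm:solADMM_Tik}.
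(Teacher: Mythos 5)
Your proposal is correct and takes essentially the same route as the paper: decouple the joint $(\Vx,\Vv)$-minimization in the first ADMM step, aggregate the quadratic terms via the identities $\FV^*_{\Vx}\FV_{\Vx} = 4\,\mathrm{Id}$ and $\FV^*_{\Vv}\FV_{\Vv} = 2\,\mathrm{Id}$ (the paper obtains the same $5\rho$ factor and centre by expanding the Frobenius norms columnwise using $\tilde\Vx_n = \Ve \odot \Vx_n$ and the symmetry of $\MU^{(k)}$, $\MZ^{(k)}$, which your adjoint-based shorthand implicitly relies on as well), and recognize the remaining problem as $\prox_{\nicefrac{\mu}{5\rho}\TV}$. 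The second step, with the decoupled projection onto $\Aff_{d+1}$ and the shift $\tilde\MU_n = \MU_n + \hat\ME$ reducing the constraint to a PSD-cone projection, matches the paper's treatment, which simply refers back to the proof of Thm.~\ref{thm:solADMM_Tik}.
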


The projection onto $\Aff_{d+1}$
and the positive semi-definite matrices
is known from \eqref{eq:projections}.
Further,
we rely on an anisotropic TV regularization.
Therefore, 
we can apply the fast TV programs \cite{Con12v2, Con13v4} coordinatewise
to efficiently compute $\prox_{\nicefrac{\mu}{4\rho}\TV}$
for line and grid graphs, 
i.e.\ for signals and images,
cf.\ \cite[Rem.~4.1]{BeBr24}.

\begin{proof}
    In the proof,
    we denote by $\llbracket \MA \rrbracket_j$ 
    the first $d+1$ entries of the $j$-th column of $\MA \in \Msym_{d+3}$.
    Furthermore, 
    let $\Ve \coloneqq (1,\cdots,1,-1)^* \in \R^{d+1}$.
    Then,
    the adjoint in \eqref{eq:adj_x_V} may be written as
    \begin{align*}
        (\FV_{\Vx}^* (\MU))_{n}
        &= 2 \, \bigl( \llbracket \MU_n \rrbracket_{d+2} + \Ve \odot \llbracket \MU_n \rrbracket_{d+3} \bigr)
        \quad \forall n \in V,
    \end{align*}
    where $\odot$ denotes the Hadamard product.
    In the first ADMM step \eqref{eq:ADMM_I}
    for our splitting \eqref{eq:ADMM_TV},
    the minimization with respect to $\Vx$ and $\Vv$ decouples.
    Using the symmetry of $\MU^{(k)}$ and $\MZ^{(k)}$,
    exploiting $\tilde \Vx_n = \Ve \odot \Vx_n$,
    and combining the squared norms and inner products,
    we can compute the minimizer with respect to $\Vx$ by
    \begin{align*}
        & \argmin_{\Vx \in (\R^{d+1})^N}
        \Bigl\{\tfrac{1}{2}\sum_{n \in V} 
        (\Vv_n - 2\langle \Vx_n,\Vy_n\rangle)
        + \mu \TV(\Vx)
        + \tfrac{\rho}{2}\sum_{n \in V}
        \lVert \Vx_{n} - \Vu_n^{(k)} + \Vz_n^{(k)}\rVert_2^2 
        \\
        &\hspace{6.39cm}+ 
        \tfrac{\rho}{2}\sum_{n \in V}
        \rVert\FV_{n}(\Vx,\Vv) - \MU_n^{(k)} + \MZ_n^{(k)}\lVert_{\tF}^2 
        \Bigr\}
        \\
        &= 
        \argmin_{\Vx \in (\R^{d+1})^N}
        \Bigl\{-\smashoperator{\sum_{n \in V}} \langle \Vx_n,\Vy_n\rangle
        + \mu \TV(\Vx)
        + \rho \sum_{n \in V} 
        \Bigl[
        \tfrac{1}{2}
        \lVert\Vx_{n} - \Vu_n^{(k)} + \Vz_n^{(k)}\rVert_2^2
        \\
        &\hspace{1.2cm}
        + 
        \lVert\Vx_{n} - \llbracket\MU_{n}^{(k)}\rrbracket_{d+2} + \llbracket \MZ_{n}^{(k)} \rrbracket_{d+2}\rVert_2^2 
        + 
        \lVert\tilde\Vx_{n} - \llbracket \MU_{n}^{(k)} \rrbracket_{d+3} + \llbracket \MZ_{n}^{(k)} \rrbracket_{d+3} \rVert_2^2  
        \Bigr]\Bigr\}
        \\
        &= \argmin_{\Vx \in (\R^{d+1})^N}
        \Bigl\{-\smashoperator{\sum_{n \in V}} \langle \Vx_n,\Vy_n\rangle
        + \mu \TV(\Vx)
        \\[-10pt]
        &\hspace{4.11cm}
        + \tfrac{5\rho}{2}
        \sum_{n \in V} 
        \lVert \Vx_{n} 
        - \tfrac{1}{5}
        [\FV^*_{\Vx}(\MU - \MZ)_n + (\Vu^{(k)} - \Vz^{(k)})_n]\rVert_2^2 \Bigr\} \\
        & = \argmin_{\Vx \in (\R^{d+1})^N}
        \Bigl\{
        \tfrac{5\rho}{2}\sum_{n \in V} 
        \lVert\Vx_{n} - \tfrac{1}{5}[\FV^*_{\Vx}(\MU - \MZ)_{n} + \tfrac{\Vy_n}{\rho} + (\Vu_n - \Vz_n)]\rVert_2^2 
        + \mu \TV(\Vx)\Bigr\},
    \end{align*}
    Because of the differentiability of the objective with respect to $\Vv$,
    and since $\FV^*_{\Vv}(\FV(\Vx, \Vv)) = 2\Vv$,
    the minimizer is given by
    \begin{align*} 
        \Mzero_N
        &= 
        \tfrac{1}{2} \Vones_N 
        + \rho\FV_{\Vv}^*(\FV_{\Vv}(\Vx,\Vv) - \MU^{(k)} + \MZ^{(k)}), 
        \quad\text{and thus}
        \\
        \Vv^{(k+1)}
        &= 
        \tfrac{1}{2}
        [\FV^{*}_{\Vv}(\MU^{(k)} - \MZ^{(k)})
        - \tfrac{1}{2\rho} \Vones_N].
    \end{align*}
    The two subproblems with respect to $\MU$ and $\Vu$ 
    of the second ADMM step \eqref{eq:ADMM_II} regarding \eqref{eq:ADMM_TV}
    can be treated similarly to the proof of Thm.~\ref{thm:solADMM_Tik}
    yielding the projection onto the positive semi-definite matrices and $\Aff_{d+1}$.
    \qed
\end{proof}

% ------------------------------------------------------------------------------------------
\section{Numerical Experiments}         \label{sec:num-exp}
% ------------------------------------------------------------------------------------------

The derived algorithms in Thm.~\ref{thm:solADMM_Tik} and \ref{thm:solADMM_TV} are implemented\footnote{
The code is available at GitHub: \url{https://github.com/JJEWBresch/relaxed_tikhonov_regularization}.} 
in Python~3.11.4 using Numpy~1.25.0 and Scipy~1.11.1.
The experiments are performed on an off-the-shelf iMac 2020
with Apple M1 Chip (8‑Core CPU, 3.2~GHz) and 8~GB RAM.
The convergence of both algorithms 
is ensured by \cite[Cor.~28.3]{bauschke}.
For all experiments,
the methods are initialized by zeros for all variables.
The resulting implementations are very similar to the ones 
proposed for sphere- and SO(3)-valued data in
\cite{BeBrSt24,BeBr24}.

%\todo[inline]{Experiments in:
%-- synthetic data 
%-- Gaussian image processing
%-- Giga camera.}

\paragraph{Synthetic Image Processing.}
In this subsection,
we give a proof of the concept for both models%
---the Tikhonov-like 
and TV model---%
based on synthetic data.
For the ground truth $\Vx_{\text{true}}$
of the two experiments,
see~Fig.~\ref{fig:H1/H2-signals},
we interpolate points in $\R$ or $\R^2$
and apply the smooth mappings
\begin{subequations}   
    \label{eq:paraHd}
    \begin{align}
        \Vx_{\hyper_1} 
        &\colon 
        \R \to \hyper_1,
        &&
        r \mapsto (\sinh(r), \cosh(r))^*,
        \label{eq:paraH1}
        \\
        \Vx_{\hyper_2} 
        &\colon 
        \R^2 \to \hyper_2,
        &&
        (r,s) \mapsto (\sinh(r) \cos(s), \sinh(r) \sin(s), \cosh(r))^*.
        \label{eq:paraH2}
    \end{align}
\end{subequations}

In the first experiment,
where an one-dimensional signal of length 400,
i.e.\ a signal on a line graph,
is considered,
the noisy measurements $\Vy \in \hyper_1^{400}$ are generated employing
the tangential normal distribution \cite{laus} 
with standard deviation $\sigma \coloneqq 0.6$,
see Fig.~\ref{fig:H1/H2-signals} (top).
The $\hyper_1$-valued signals are here illustrated 
via the parameter $r$ in \eqref{eq:paraH1}.
Applying ADMM from Thm.~\ref{thm:solADMM_Tik} and \ref{thm:solADMM_TV}
for the Tikhonov-like and TV model respectively,
we observe convergence to the hyperbolic sheet.
In both cases,
the mean absolute error (MAE)
with respect to the Minkowski bilinearform $\Mbiform \equiv -1$
is $10^{-4}$ at the most.
The restored signals after 12 sec. and the used parameters 
are recorded in Fig.~\ref{fig:H1/H2-signals} (top).

\begin{figure}[t]
    \begin{minipage}{\linewidth}
    \includegraphics[width=\linewidth, clip=true, trim=110pt 5pt 110pt 15pt]{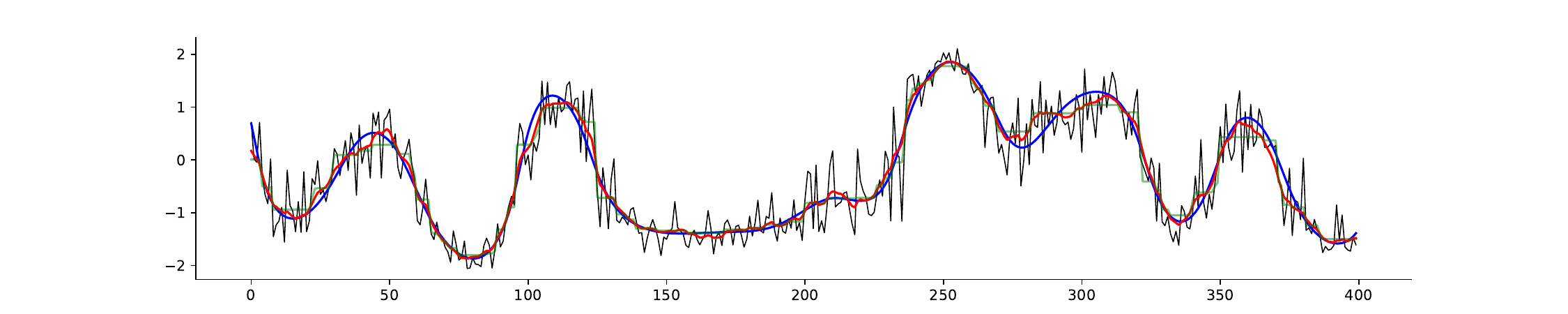}
    %{1-hyperbolic/1-hyperbolic_data_denoise_presentation_sig=0.6.pdf}
    \end{minipage}
    \begin{minipage}{0.54\linewidth}
    \caption{Restoration of smooth synthetic line signals (blue)
    from noisy measurements (black/gray).
    For the Tikhonov model (red),
    we choose $\rho = 10^{-1}$,
    $\lambda = 6$ (top) and
    $\lambda = 5$ (right).
    For the TV model (green),
    we choose
    $\rho = 1$,
    $\mu = 0.75$ (top),
    and $\mu = 0.1$ (right).
    Note that 
    the $\hyper_1$ signal (top) is visualized 
    using the parametrization in \eqref{eq:paraH1}.
    The $\hyper_2$ signal (right) is directly visualized
    on the hyperbolic sheet in $\Aff_3 \subset \R^3$.
    }
    \label{fig:H1/H2-signals}
    \end{minipage}
    \hspace{0.0cm}
    \begin{minipage}{0.44\textwidth}
    \includegraphics[width=\linewidth, clip=true, trim=150pt 250pt 120pt 120pt]{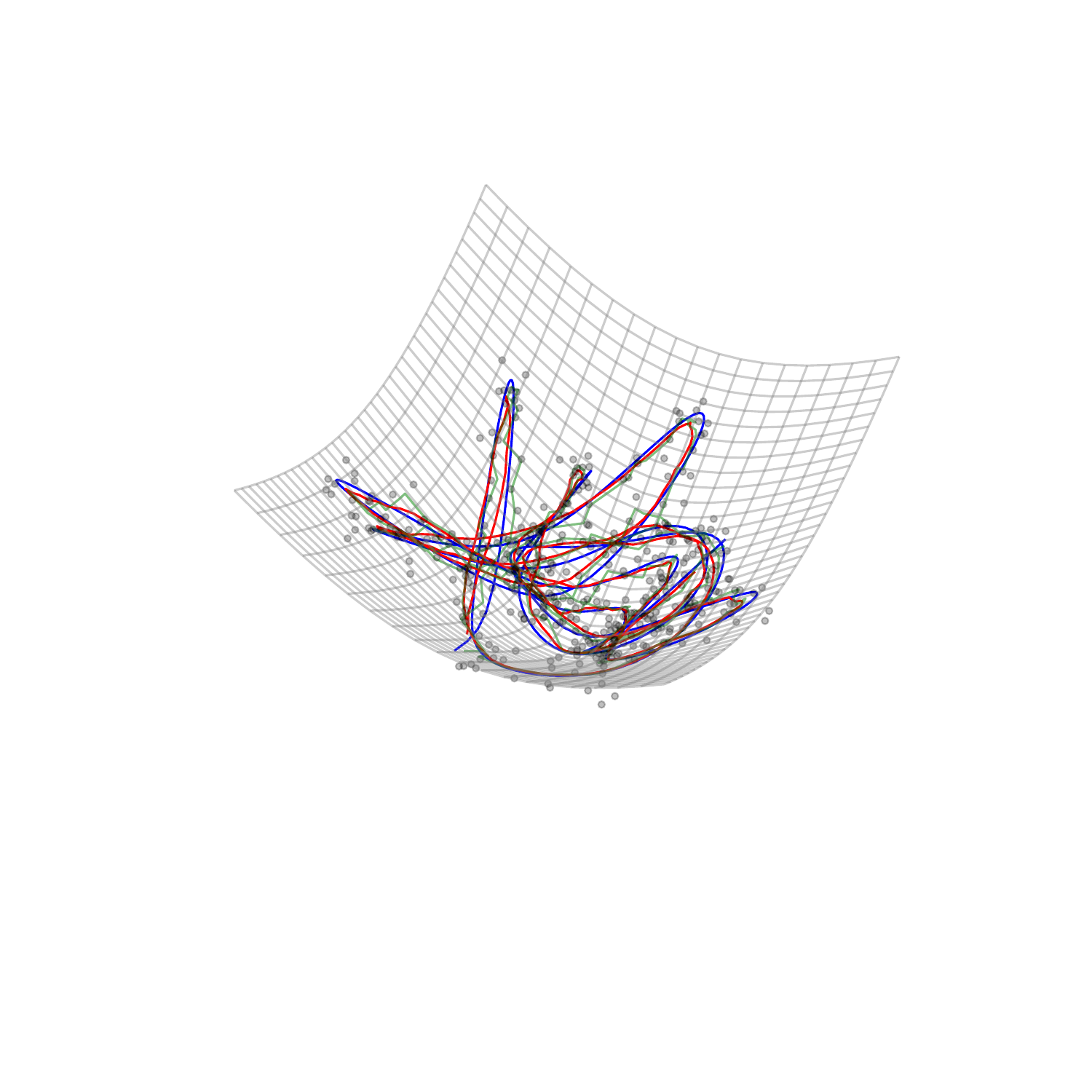}
    %{2-hyperbolic/2-hyperbolic_denoising_sig_0.3.pdf}
    \end{minipage}
\end{figure}

In the second experiment,
we consider a line signal of length 400
with values in $\hyper_2$.
The noisy measurements are generated via 
additive Gaussian noise,
i.e.\ $\Vy \sim \mathcal N(\Vx_{\text{true}}, \sigma^2 \MI)$
with $\sigma \coloneqq 0.3$.
Note that the measurements lie in $\R^{3}$ and not necessarily on $\hyper_2$.
The denoised signals using ADMM for both models
are shown in Fig.~\ref{fig:H1/H2-signals} (right).
After 80 sec.,
we recorded an MAE 
with respect to $\eta \equiv -1$
of $10^{-5}$ at the most.
Because of convergence back to the hyperbolic sheet, 
the numerical solutions are actual solutions of the non-convex model
\eqref{eq:orig_tik_Hd} and \eqref{eq:orig_tik_Hd_TV} respectively.

\begin{figure}[t]
\begin{minipage}{0.63\linewidth}
    \includegraphics[width=\linewidth, clip=true, trim=15pt 25pt 15pt 38pt]{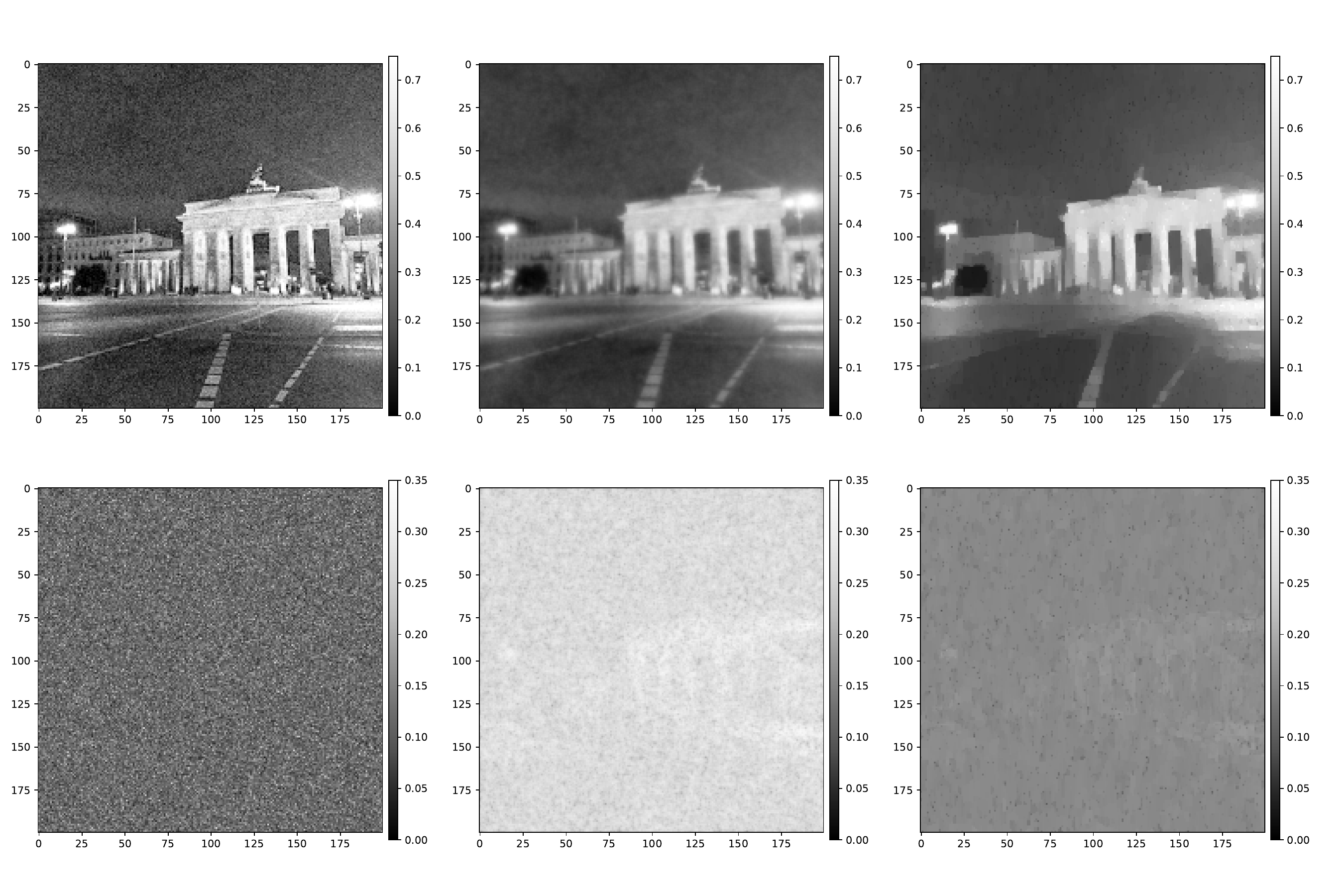}
\end{minipage}
\begin{minipage}{0.35\linewidth}
    \caption{
    Empirical mean (top) and standard deviation (bottom)
    for the Brandenburg Gate image.
    Here,
    $K = 20$ shots are randomly sampled 
    by adding white noise 
    with $\sigma = 0.15$.
    The empirical quantities are shown on the left,
    the Tikhonov denoised images (Thm.~\ref{thm:solADMM_Tik}, $\rho = 10$, $\lambda = 4$) in the middle,
    and the TV denoised images (Thm.~\ref{thm:solADMM_TV}, $\rho = 1$, $\mu=0.6$) on the right.}
    \label{fig:quant_H2}
\end{minipage}
\end{figure}

\paragraph{Gaussian Image Processing.}
The idea behind Gaussian image processing is
that,
instead of a single image,
we measure a series of $K$ noisy instances,
where we assume that each pixel $x_{i,j}^{(k)}$
follows a Gaussian distribution.
The empirical mean and variance may be estimated 
by the maximum likelihood estimators
\begin{equation}        
    \label{eq:estMeanSigMaxLike}
    \hat \mu_{ij} \coloneqq \frac{1}{K}\sum_{k = 1}^K x_{i,j}^{(k)},
    \qquad
    \hat \sigma_{ij}^2 \coloneqq \frac{1}{K}\sum_{k = 1}^K (x_{i,j}^{(k)} - \hat\mu_{ij})^2.
\end{equation}
The set of Gaussians $\mathcal N(\mu,\sigma)$ parameterized by $(\mu, \sigma) \in \R \times \R_{>0}$
and equipped with the so-called Fisher metric
is isometric to the hyperbolic space $\hyper_2$.
More precisely,
the isometry is given by 
$\pi_3\circ\pi_2\circ\pi_1\colon \mathcal N \to \hyper_2$
with
\begin{subequations}
    \label{eq:isometry}
    \begin{align}
        &
        \pi_1 \colon \mathcal N \to \Phalf_2,
        &(\mu,\sigma) 
        &\mapsto 
        \tfrac{1}{\sqrt{2}}
        (\mu, \sqrt{2}\sigma)^*,
        \\
        &
        \pi_2 \colon \Phalf_2 \to \Pdisc_2,
        &
        \Vy 
        &\mapsto 
        \tfrac{1}{y_1^2 + (y_2+1)^2} 
        (2 y_1, \lVert\Vy\rVert_2^2 - 1)^*,
        \\
        & \pi_3 \colon \Pdisc_2 \to \hyper_2,
        &
        \Vy 
        &\mapsto
        \tfrac{1}{1-\lVert\Vy\rVert_2^2}
        (2 y_1, 2 y_2, 1 + \lVert\Vy\rVert_2^2)^*, 
    \end{align}
\end{subequations}
where $\Pdisc_2 \coloneqq \{\Vx \in \R^2 : \lVert\Vx\rVert_2<1\}$
is the Pioncaré disc,
and the Pioncaré half-space
is given by $\Phalf_2 \coloneqq \{\Vx \in \R^2 : x_2 > 0\}$, 
see \cite{BerPerSte16,L97}.

In the first experiment,
as proof of concept,
we consider a gray-valued image of the Brandenburg Gate
and generate a series of 20 images by
adding Gaussian white noise 
with constant standard deviation.
Calculating the empirical mean and variance pixelwise,
and using the isometry in \eqref{eq:isometry},
we obtain an $\hyper_2$-valued image.
Afterwards we denoise mean and variance simultaneously 
using the algorithms from Thm.~\ref{thm:solADMM_Tik} and \ref{thm:solADMM_TV}.
The results are recorded in Fig.~\ref{fig:quant_H2}.
Analyzing the qualitative outcomes,
we conclude 
that both models are adequate ansätze.
The Tikhonov-like model, however, overestimates the standard deviation.

In the second experiment,
we consider a real-world application of Gaussian image processing,
where 20 highly noisy images of a retina are given. 
The experiment originates from \cite{AV14},
and the goal is again to denoise mean and variance simultaneously.
The qualitative results of the Tikhonov-like and TV denoising
are shown in Fig.~\ref{fig:2D-H2-image}.
The Tikhonov-like model 
again overestimates the standard deviation.
During the denoising,
we observe numerical convergence to the hyperbolic sheet.
More precisely,
we reach an MAE of at most $10^{-4}$
with respect to $\eta \equiv -1$
in 15 min.\ for the Tikhonov-like model
and in 3 min.\ for the TV model.
The numerical solutions are thus
solutions of the non-convex models
\eqref{eq:orig_tik_Hd} and \eqref{eq:orig_tik_Hd_TV}, too.
For the TV proximity mapping in Thm.~\ref{thm:solADMM_TV},
we use the TV program (ADRA) \cite{Con12v2} 
with $\gamma = 1$ as subprocedure.
A quantitative comparison 
with the Proximal Douglas--Rachford Algorithm (PDRA) \cite{BerPerSte16}
for more generally manifold-valued data
is given in Tab.~\ref{tab:com_H2_images}.
Our relaxed Euclidean models here yield comparable results
with respect to the SNR (signal-to-noise ratio)
and the computation time.

\subsubsection{Acknowledgements} 
We would like to thank R. Bergmann, J. Persch and G. Steidl
for providing the data of the retina experiment,
and G. Steidl for drawing the attention to this experiments.
Originally,
the data was provided by J. Angulo.

\begin{figure}[t]
\begin{minipage}{0.231\linewidth}
\includegraphics[width=\linewidth, clip=true, trim=1pt 0pt 0pt 2pt]{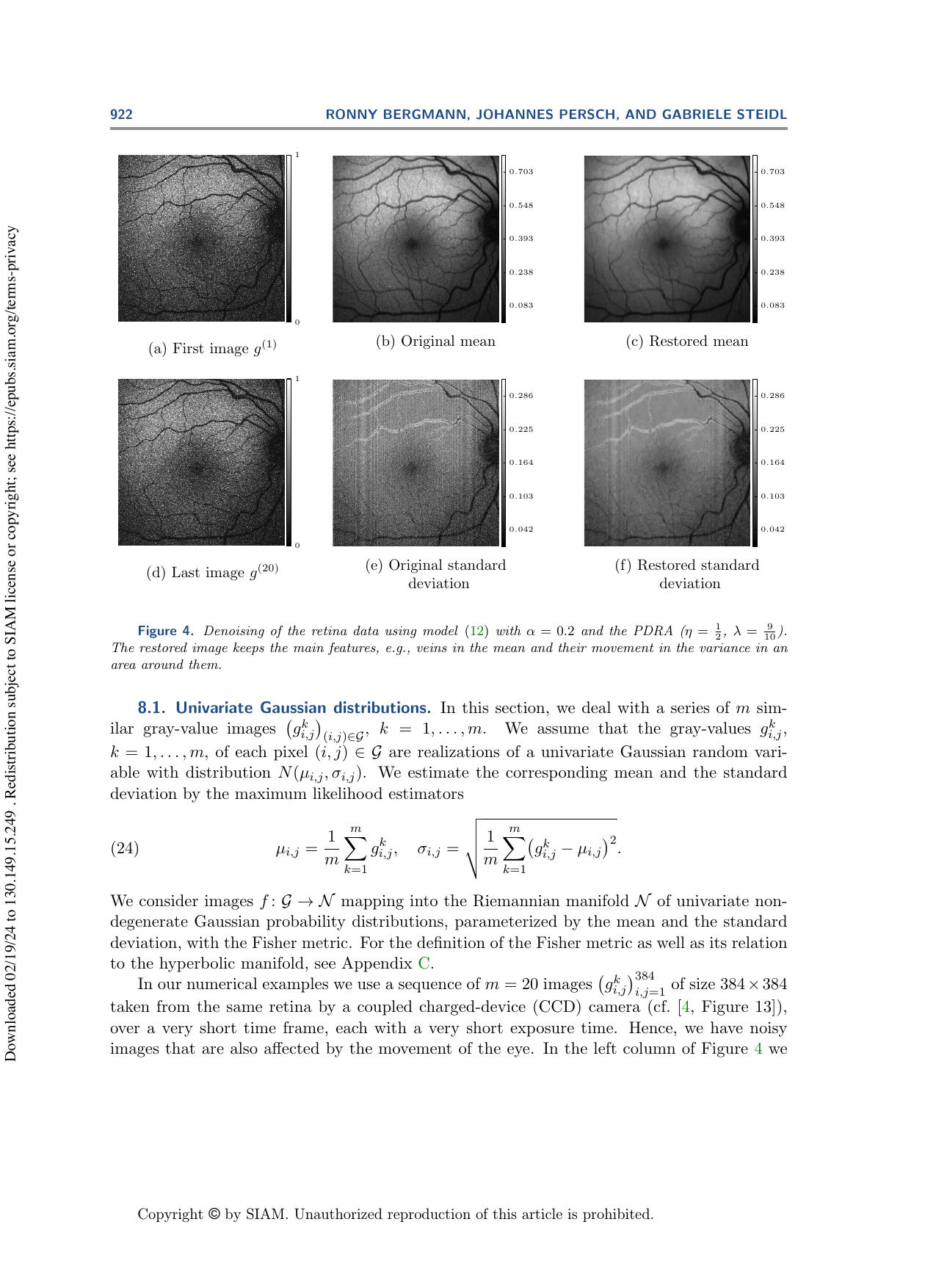}
\includegraphics[width=\linewidth, clip=true, trim=0pt 0pt 2pt -14pt]{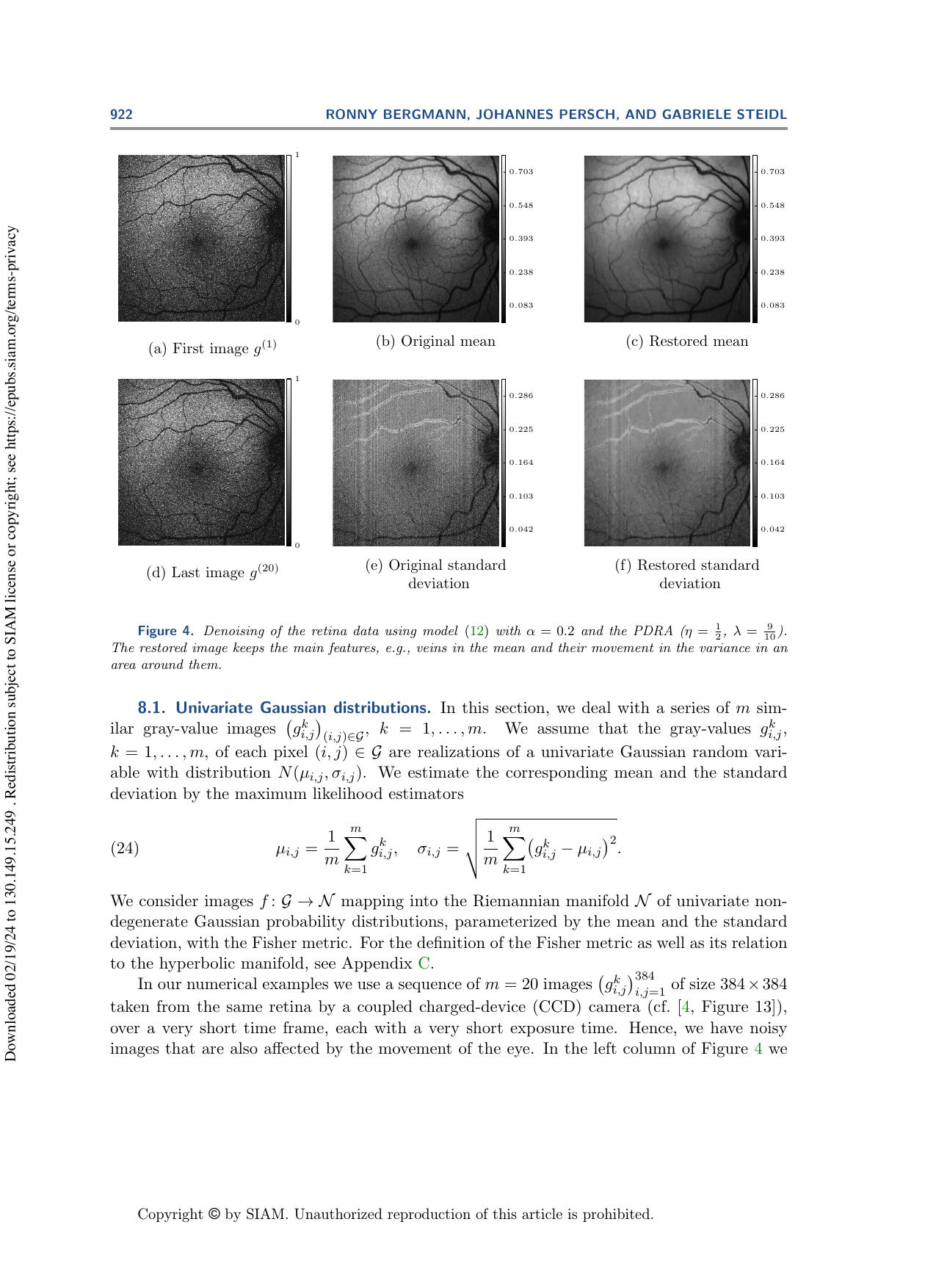}
\end{minipage}
\raisebox{-2.8cm}{\rule{1pt}{5.75cm}}
\begin{minipage}{0.84\linewidth}
\includegraphics[width=0.915\textwidth, clip=true, trim=5pt 35pt 25pt 50pt]{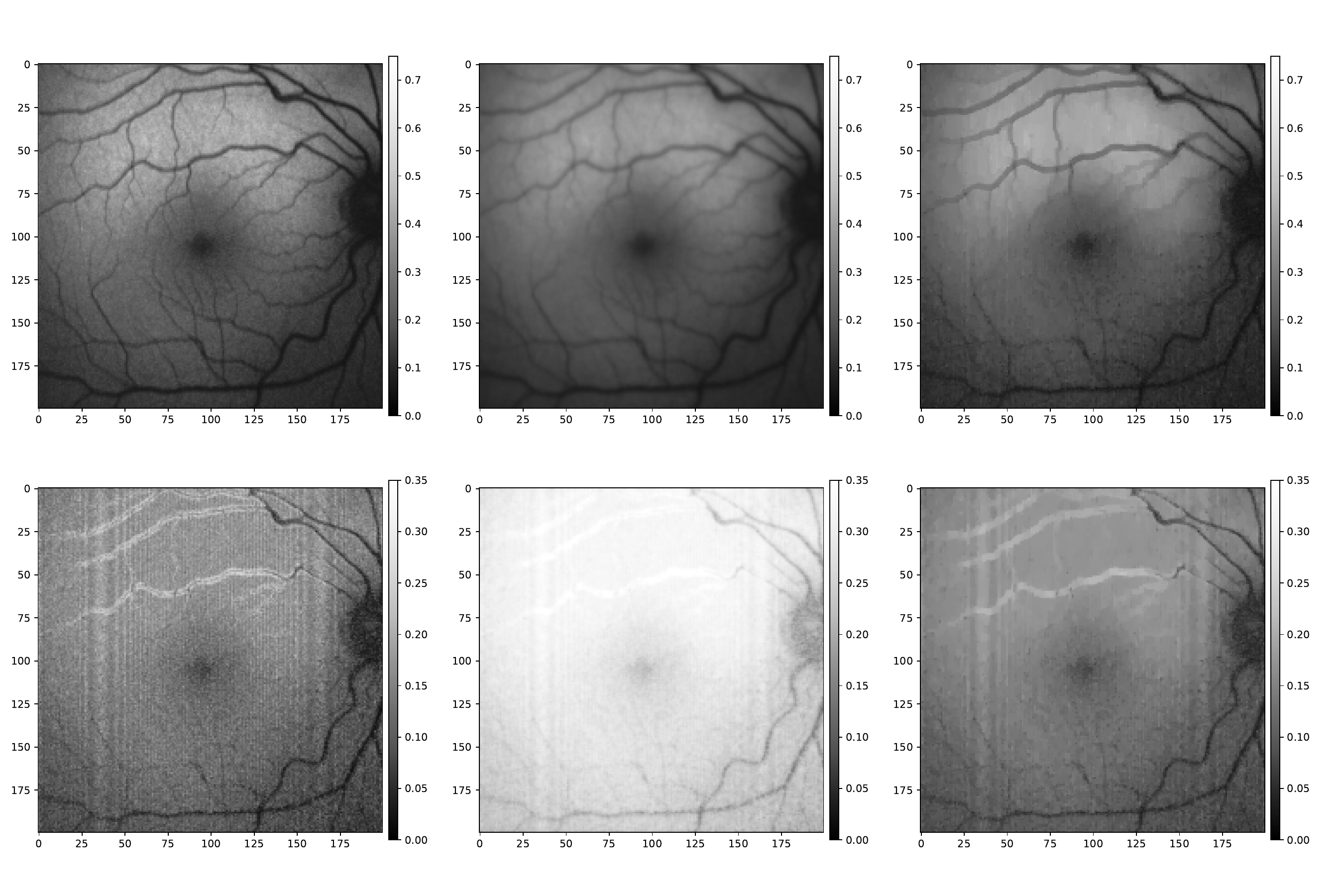}
\end{minipage}
\vspace{-0.45cm}
\caption{
Clearing the estimated mean 
and standard deviation of $K = 20$ retina scans.
\textit{Leftmost:} 
the 1st (top) 
and 20th (bottom) image 
of the retina scans.
\emph{From left to right:}
the empirical mean (top)
and standard deviation (bottom);
Tikhonov denoised images 
(Thm.~\ref{thm:solADMM_Tik}, $\lambda = 1.5$, $\rho = 10$);
TV denoised images (Thm.~\ref{thm:solADMM_TV}, $\mu = 0.15$, $\rho = 1$).
%For a detailed comparison concerning the TV model
%we refer to Tab.~\ref{tab:com_H2_images}.
}
\label{fig:2D-H2-image}
\end{figure}

\begin{table}[t]
    \footnotesize
    \resizebox{\textwidth}{!}{
    \begin{tabular}{c@{\quad}c@{\quad}c@{\quad}c@{\quad}c@{\quad}c | c@{\quad}c@{\quad}c@{\quad}c}
        \toprule
        \multicolumn{6}{c}{ADMM from Thm.~\ref{thm:solADMM_TV} ($\rho = 1$)} & \multicolumn{4}{|c}{PDRA ($\eta = 0.5, \lambda = 0.9$)} \\
        SNR($\hat\mu$) & SNR($\hat\sigma$) & $\mu$ & $\FK$ & MEA $\eta \equiv -1$ & time (sec.) & SNR($\hat\mu$) & SNR($\hat\sigma$) & $\alpha$ & time (sec.) \\
        \toprule
        $6.121$ & $12.211$ & $0.05$ & $2.61 \cdot 10^5$ & $4.73 \cdot 10^{-4}$ & 160 
        & 
        $5.963$ & $12.400$ & $0.1$ & 120
        \\
        $6.001$ & $12.369$ & $0.10$ & $2.54 \cdot 10^5$ & $4.78 \cdot 10^{-4}$ & 180 
        &
        $5.911$ & $12.618$ & $0.3$ & 140 
        \\
        $\mathbf{5.986}$ & $\mathbf{12.301}$ & $0.15$ & $1.81 \cdot 10^5$ & $4.47 \cdot 10^{-4}$ & 190 
        & 
        $5.874$ & $12.748$ & $0.5$ & 280
        \\
        $5.966$ & $12.455$ & $0.20$ & $1.19 \cdot 10^5$ & $4.67 \cdot 10^{-4}$ & 220 
        & 
        $5.843$ & $12.849$ & $0.7$ & 430\\
        \bottomrule
    \end{tabular}
    }
    \vspace{0.05cm}
    \caption{
    Comparison of the TV denoising (Thm.~\ref{thm:solADMM_TV})
    and PDRA (Douglas--Rachford) \cite{BerPerSte16}
    regarding the SNR for the restored mean 
    and standard deviation.
    The marked instance ($\mu = 0.15$) is visualized in Fig.~\ref{fig:2D-H2-image}.}
    \label{tab:com_H2_images}
    \vspace{-0.8cm}
\end{table}

\vspace{-5pt}
\bibliographystyle{splncs04}
\bibliography{literatur}

\end{document}